\numberwithin{equation}{section}
\newtheorem{theorem}{Theorem}[section]
\newtheorem{corollary}[theorem]{Corollary}
\newtheorem{lemma}[theorem]{Lemma}
\theoremstyle{definition}
\newtheorem{defn}[theorem]{Definition}
\theoremstyle{remark}
\theoremstyle{remark}
\theoremstyle{remark}
\def\N{\mathbb{N}}
\def\M{\mathbf{M}}
\def\B{\mathbf{B}}
\def\P{\mathbf{P}}
\def\Y{\mathbf{Y}}
\def\X{\mathbf{X}}
\def\A{\mathbf{A}}
\def\Q{\mathbf{Q}}
\def\R{\mathbb{R}}
\def\D{\mathbf{D}}
\def\K{\mathbf{K}}
\def\f{\mathbf{f}}
\def\x{\mathbf{x}}
\def\y{\mathbf{y}}
\def\z{\mathbf{z}}
\title{An algorithm for semi-infinite polynomial optimization}
\thanks{This work was performed during a visit in November 2010 at CRM (Centre de Recerca Matematica), a Mathematics center from
UAB (Universidad Autonoma de Barcelona), and the author wishes to gratefully acknowledge financial support from CRM}
\author{J.B. Lasserre}
\address{J.B. Lasserre: LAAS-CNRS, 7 Avenue du Colonel Roche,
31077 Toulouse C\'{e}dex 4, France.}
\email{lasserre@laas.fr}
\date{}
\begin{document}

\begin{abstract}
We consider the semi-infinite optimization problem:
\[f^*:=\min_{\x\in\X}\:\{f(\x): g(\x,\y)\,\leq \,0,\:\forall\y\in\Y_\x\,\},\]
where $f,g$ are polynomials and $\X\subset\R^n$ as well as $\Y_\x\subset\R^p$, $\x\in\X$, are
compact basic semi-algebraic sets. To approximate $f^*$ we proceed in two steps.
First, we use the ``joint+marginal" approach
of the author \cite{lass-param} to approximate from above the function $\x\mapsto\Phi(\x)=\sup \{g(\x,\y): \y\in \Y_\x\}$ by a polynomial $\Phi_d\geq\Phi$,
of degree at most $2d$, with the strong property that $\Phi_d$ converges to $\Phi$ for the $L_1$-norm, as $d\to\infty$
(and in particular, almost  uniformly for some subsequence $(d_\ell)$, $\ell\in\N$).
Therefore, to approximate $f^*$ one may wish to solve the polynomial optimization problem 
$f^0_d=\min_{\x\in\X} \{f(\x): \Phi_d(\x)\leq0\}$ via a (by now standard) hierarchy of semidefinite relaxations, and 
for increasing values of $d$.
In practice $d$ is fixed, small,
and one relaxes the constraint $\Phi_d\leq0$ to $\Phi_d(\x)\leq\epsilon$ with $\epsilon>0$, allowing to change $\epsilon$ dynamically. As $d$ increases, the limit of the optimal value
$f^\epsilon_d$ is bounded above by $f^*+\epsilon$.
\end{abstract}

\keywords{Polynomial optimization; min-max optimization; robust optimization; semidefinite relaxations}

\maketitle

\section{Introduction}
Consider the semi-infinite optimization problem:
\begin{equation}
\label{defsemi-inf}
\P:\qquad f^*:=\min_{\x\in\X}\:\{f(\x): g(\x,\y)\,\leq \,0,\:\forall\y\in\Y_\x\,\},
\end{equation}
where $\X\subset\R^n$, $\Y_\x\subset\R^p$ for every $\x\in\X$, and
some functions $f:\R^n\to\R$, $g:\R^n\times\R^p:\to\R$.

Problem $\P$ is called a {\it semi-infinite} optimization problem because of
the infinitely many constraints $g(\x,\y)\leq0$ for all $\y\in\Y_\x$ (for each fixed $\x\in\X$). It has many applications
and particularly in robust control.

In full generality $\P$ is a very hard problem and most methods aiming
at computing (or at least approximating) $f^*$ use discretization to
overcome the difficult semi-infinite constraint $g(\x,\y)\leq 0$ for all $\y\in\Y_\x$.
Namely, in typical approaches where $\Y_\x\equiv\Y$ for all $\x\in\X$ 
(i.e. no dependence on $\x$), the set $\Y\subset\R^p$ is discretized on a finite grid and if
the resulting nonlinear programming problems are solved to global optimality,
then convergence to a global optimum of the semi-infinite problem occurs as the grid size vanishes (see e.g. the discussion and the many references in \cite{mitsos}).
Alternatively, in \cite{mitsos} the authors provide lower bounds on $f^*$
by discretizing $\Y$ and upper bounds via convex relaxations of the inner 
problem $\max_{\y\in\Y}\{g(\x,\y)\}\leq0$. In \cite{parpas} the authors also use a discretization scheme of $\Y$ but now combined with a hierarchy of {\it sum of squares} convex relaxations for solving to global optimality. 

\subsection*{Contribution.} 
We restrict ourselves to problem $\P$ where :

\begin{itemize}
\item $f,g$ are polynomials, and 
\item $\X\subset\R^n$ and $\Y_\x\subset\R^p,\,\x\in\X$, are compact basic semi-algebraic sets.
\end{itemize}
For instance many problems of robust control can be put in this framework; see e.g. their description
in \cite{calafiore}.
Then in this context we provide a numerical scheme whose novelty with respect to previous works is to avoid discretization of
the set $\Y_\x$. Instead  we use the ``joint+marginal" methodology for parametric polynomial optimization 
developed by the author in \cite{lass-param}, to provide a sequence of polynomials
$(\Phi_d)\subset\R[\x]$ (with degree $2d$, $d\in\N$) that approximate from above the function
$\Phi(\x):=\max_{\y}\,\{g(\x,\y)\,:\,\y\in\Y_\x\}$, and with the strong property that
if $d\to\infty$ then
$\Phi_d\to \Phi$ in the $L_1$-norm. (In particular, $\Phi_{d_\ell}\to\Phi$ almost uniformly on $\X$ for some subsequence
$(d_\ell)$, $\ell\in\N$.)
Then, ideally, one could solve the 
nested sequence of polynomial optimization problems:
\begin{equation}
\label{substitute}
\P_d:\quad f^*_d=\min\:\{f(\x)\::\: \Phi_d(\x)\,\leq\,0\:\},\qquad d=1,2,\ldots
\end{equation}
For fixed $d$,
one may approximate (and often solve exactly) (\ref{substitute}) by solving a hierarchy of semidefinite relaxations, as defined
in \cite{las01}. However, as the size $O(d^n)$ of these semidefinite relaxations increases very fast
with $d$, in practice one rather let $d$ be fixed, small, and relax the constraint
$\Phi_d(\x)\leq0$ to $\Phi_d(\x)\leq \epsilon$ for some scalar $\epsilon>0$ that one may adjust dynamically during the algorithm.
As $d$ increases, the resulting optimal value $f^\epsilon_d$ is bounded above by $f^*+\epsilon$. The approach is illustrated on a sample of small problems taken from the literature.

\section{Notation, definitions and preliminary results}
Let $\R[\x]$ (resp. $\R[\x,\y]$) denote the ring of real polynomials in the variables
$\x=(x_1,\ldots,x_n)$ (resp. $\x$ and $\y=(y_1,\ldots,y_p)$), whereas $\Sigma[\x]$ (resp. $\Sigma[\x,\y]$) denote 
its subset of sums of squares.

Let $\R[\y]_k\subset\R[\y]$ denote the vector space of real polynomials of degree at most $k$.
For every
$\alpha\in\N^n$ the notation $\x^\alpha$ stands for the monomial $x_1^{\alpha_1}\cdots x_n^{\alpha_n}$ and for every 
$d\in\N$, let $\N^{n}_d:=\{\alpha\in\N^n:\sum_j\alpha_j\leq d\}$ with cardinal $s(d)={n+d\choose n}$.
Similarly $\N^{p}_d:=\{\beta\in\N^p:\sum_j\beta_j\leq d\}$ with cardinal ${p+d\choose p}$.
A polynomial $f\in\R[\x]$ is written 
\[\x\mapsto f(\x)\,=\,\sum_{\alpha\in\N^n}\,f_\alpha\,\x^\alpha,\]
and $f$ can be identified with its vector of coefficients $\f=(f_\alpha)$ in the canonical basis.
For a real symmetric matrix $\A$ the notation $\A\succeq0$ stands for $\A$ is positive semidefinite.

A real sequence $\z=(z_\alpha)$, $\alpha\in\N^n$, has a {\it representing measure} if
there exists some finite Borel measure $\mu$ on $\R^n$ such that 
\[z_\alpha\,=\,\int_{\R^n}\x^\alpha\,d\mu(\x),\qquad\forall\,\alpha\in\N^n.\]

Given a real sequence $\z=(z_\alpha)$ define the linear functional $L_\z:\R[\x]\to\R$ by:
\[f\:(=\sum_\alpha f_\alpha\x^\alpha)\quad\mapsto L_\z(f)\,=\,\sum_{\alpha}f_\alpha\,z_\alpha,\qquad f\in\R[\x].\]
\subsection*{Moment matrix}
The {\it moment} matrix associated with a sequence
$\z=(z_\alpha)$, $\alpha\in\N^n$, is the real symmetric matrix $\M_d(\z)$ with rows and columns indexed by $\N^n_d$, and whose entry $(\alpha,\beta)$ is just $z_{\alpha+\beta}$, for every $\alpha,\beta\in\N^n_d$. 
If $\z$ has a representing measure $\mu$ then
$\M_d(\z)\succeq0$ because
\[\langle\f,\M_d(\z)\f\rangle\,=\,\int f^2\,d\mu\,\geq0,\qquad\forall \,\f\,\in\R^{s(d)}.\]
\subsection*{Localizing matrix}
With $\z$ as above and $g\in\R[\x]$ (with $g(\x)=\sum_\gamma g_\gamma\x^\gamma$), the {\it localizing} matrix associated with $\z$ 
and $g$ is the real symmetric matrix $\M_d(g\,\z)$ with rows and columns indexed by $\N^n_d$, and whose entry $(\alpha,\beta)$ is just $\sum_{\gamma}g_\gamma z_{\alpha+\beta+\gamma}$, for every $\alpha,\beta\in\N^n_d$.
If $\z$ has a representing measure $\mu$ whose support is 
contained in the set $\{\x\,:\,g(\x)\geq0\}$ then
$\M_d(g\,\z)\succeq0$ because
\[\langle\f,\M_d(g\,\z)\f\rangle\,=\,\int f^2\,g\,d\mu\,\geq0,\qquad\forall \,\f\,\in\R^{s(d)}.\]
\begin{defn}[Archimedean property]
\label{ass-1}
A set of polynomials $q_j\in \R[\x]$, $j=0,\ldots,p$ (with $q_0=1$), satisfy the Archimedean property if the quadratic polynomial $\x\mapsto M-\Vert\x\Vert^2$ can be written in the form:
\[M-\Vert\x\Vert^2=\sum_{j=0}^p\sigma_j(\x)\,q_j(\x),\]
for some sums of squares polynomials $(\sigma_j)\subset\Sigma[\x]$.
\end{defn}
Of course the Archimedean property implies that the set $\D:=\{\x\in\R^n\,:\,q_j(\x)\geq0,\:j=1,\ldots,p\}$ is compact. 
For instance, it holds whenever the level set $\{\x: q_k(\x)\geq0\}$ is compact for some $k\in\{1,\ldots,p\}$, or 
if the $q_j$'s are affine and $\D$ is compact (hence a polytope).
On the other hand, if $\D$ is compact then
$M-\Vert\x\vert^2\geq0$ for all $\x\in \D$ and some $M$ sufficiently large.
So if one adds the redundant quadratic constraint $\x\mapsto q_{p+1}(\x)=M-\Vert\x\Vert^2\geq0$ in the definition of $\D$ then the Archimedean property holds. Hence it is not a restrictive assumption.

Let $\D:=\{\x\in\R^n\,:\,q_j(\x)\geq0,\:j=1,\ldots,p\}$, and given a polynomial $h\in\R[\x]$, consider the hierarchy of semidefinite programs:
\begin{equation}
\label{genericsdp}
\quad\left\{\begin{array}{rll}
\rho_{\ell}=\displaystyle\min_\z&L_\z(h)&\\
\mbox{s.t.}&\M_\ell(\z),\M_{\ell-v_j}(q_j\,\z)&\succeq0,\quad j=1,\ldots,p,
\end{array}\right.
\end{equation}
where $\z=(z_\alpha)$, $\alpha\in\N^n_{2\ell}$, and $v_j=\lceil ({\rm deg}\,q_j)/2\rceil$, $j=1,\ldots,p$.
\begin{theorem}[\cite{las01,lassbook}]
\label{generic}
Let a family of polynomials $(q_j)\subset\R[\x]$ satisfy the Archimedean property.
Then as $\ell\to\infty$, $\rho_\ell\uparrow h^*=\min_\x\{h(\x)\,:\,\x\in\D\}$.
Moreover, if $\z^*$ is an optimal solution of (\ref{genericsdp}) and
\begin{equation}
\label{curto}
{\rm rank}\,\M_\ell(\z^*)\,=\,{\rm rank}\,\M_{\ell-v}(\z^*)\:(=:r)
\end{equation}
(where $v=\max_jv_j$) then $\rho_\ell=h^*$ and one may extract $r$ global minimizers $\x^*_k\in\D$, $k=1,\ldots,r$.
\end{theorem}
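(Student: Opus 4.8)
The plan is to treat the two assertions separately: first the asymptotic convergence $\rho_\ell\uparrow h^*$, and then the finite-convergence and extraction claim under the rank condition \eqref{curto}. Throughout I assume the usual normalization $z_0=1$ in \eqref{genericsdp}, so that feasible sequences are candidate moment sequences of probability measures. For the convergence I would first record the easy bounds. If $\x^*\in\D$ is a global minimizer, then the truncation to $\N^n_{2\ell}$ of the moment sequence of the Dirac measure $\delta_{\x^*}$ is feasible for \eqref{genericsdp} with value $h(\x^*)=h^*$, so $\rho_\ell\leq h^*$ for every $\ell$; and monotonicity $\rho_\ell\leq\rho_{\ell+1}$ holds because the truncation of any level-$(\ell+1)$ feasible sequence is level-$\ell$ feasible with the same objective value. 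The substance is the reverse inequality $\liminf_\ell\rho_\ell\geq h^*$. Here I would use the Archimedean property: applying $L_\z$ to the representation $M-\Vert\x\Vert^2=\sum_j\sigma_j q_j$ together with $\M_\ell(\z),\M_{\ell-v_j}(q_j\z)\succeq0$ yields $L_\z(\Vert\x\Vert^2)\leq M$, and bootstrapping gives uniform bounds on every moment of every feasible sequence. Hence the optimal sequences $\z^{(\ell)}$ lie in a compact set; a coordinatewise-convergent subsequence has a limit $\z$ whose moment and localizing matrices are all positive semidefinite, so by Putinar's representation theorem for the moment problem $\z$ is the moment sequence of a probability measure $\mu$ on $\D$. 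Then $\lim_\ell\rho_\ell=\int h\,d\mu\geq h^*$, and with $\rho_\ell\leq h^*$ this gives $\rho_\ell\uparrow h^*$. (Equivalently, on the dual side, for each $\epsilon>0$ the polynomial $h-(h^*-\epsilon)$ is strictly positive on $\D$, hence has a Putinar certificate $\sigma_0+\sum_j\sigma_j q_j$ feasible at some level, which together with weak duality closes the gap.)

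For the finite-convergence statement the engine is the flat extension theorem of Curto and Fialkow. Under \eqref{curto} the matrix $\M_\ell(\z^*)$ is a flat (rank-preserving) extension of $\M_{\ell-v}(\z^*)$, and that theorem guarantees a unique $r$-atomic representing measure $\mu=\sum_{k=1}^r\lambda_k\delta_{\x^*_k}$ with $\lambda_k>0$. The localizing conditions $\M_{\ell-v_j}(q_j\z^*)\succeq0$ force $q_j(\x^*_k)\geq0$, so each atom $\x^*_k\in\D$. Finally $\rho_\ell=L_{\z^*}(h)=\sum_k\lambda_k h(\x^*_k)\geq h^*\sum_k\lambda_k=h^*$, which combined with $\rho_\ell\leq h^*$ forces $h(\x^*_k)=h^*$ for every $k$; thus $\rho_\ell=h^*$ and the $r$ atoms are global minimizers. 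The atoms themselves are read off by standard linear algebra on $\M_\ell(\z^*)$ (from the common eigenstructure of the multiplication-by-$x_i$ operators on its column space).

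I expect the real work to be concentrated in the two external representation results—Putinar's Positivstellensatz (via its moment-problem form) for the convergence, and the Curto--Fialkow flat extension theorem for the finite-convergence claim—while the bounds, monotonicity, and limiting arguments are routine. The one delicate point is verifying that the compactness/limit argument produces a sequence genuinely satisfying the hypotheses of Putinar's theorem, i.e.\ that positive semidefiniteness of all truncated moment and localizing matrices is inherited in the limit and is exactly what the representation theorem requires.
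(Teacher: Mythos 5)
The paper gives no proof of this theorem---it is quoted verbatim from \cite{las01,lassbook}---and your outline reproduces exactly the standard argument from those sources: Putinar's Positivstellensatz (in its moment-problem form, or dually via certificates for $h-h^*+\epsilon$) for the asymptotic convergence, and the Curto--Fialkow flat-extension theorem for finite convergence and minimizer extraction under the rank condition \eqref{curto}. Your proposal is correct, and you rightly flag the normalization $z_0=1$ that the display \eqref{genericsdp} omits but without which the minimization would be trivial.
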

The size (resp. the number of variables) of the semidefinite program (\ref{genericsdp}) 
grows as $n+\ell\choose n$ (resp. as $n+2\ell\choose n$) and so becomes rapidly prohibitive, especially in view of the present status of available semidefinite solvers. Therefore, and even though 
practice reveals that convergence is fast and often finite, so far, the above methodology is limited to small to medium size problems (typically, and depending on the degree of the polynomials appearing in the data,
problems with up to $n\in[10,20]$ variables).
However, for larger size problems with sparsity in the data and/or symmetries, {\it adhoc} and tractable versions
of (\ref{genericsdp}) exist. See for instance the 
sparse version of (\ref{genericsdp}) proposed
in \cite{waki}, and whose convergence was proved in \cite{lass-sparse} when the sparsity pattern satifies the so-called {\it running intersection property}. In \cite{waki} this technique was shown to be successful on a sample
of non convex problems with up to $1000$ variables.

\section{Main result}
Let $\B\subset\R^n$ be a simple set like a box or an ellipsoid.
Let $p_s\in\R[\x]$, $s=1,\ldots,sx$, and 
$h_j\in\R[\x,\y]$, $j=1,\ldots,m$, be given polynomials and let $\X\subset\R^n$ be the basic semi-algebraic set
\[\X:=\{\x\in\R^n\::\: p_s(\x)\geq0,\quad s=1,\ldots,sx\}.\]
Next, for every $\x\in\R^n$, let $\Y_\x\subset\R^p$ be the basic semi-algebraic set described by:
\begin{equation}
\label{setyx}
\Y_\x\,=\,\{\,\y\in\R^p\::\:h_j(\x,\y)\,\geq\,0,\quad j=1,\ldots,m\,\},\end{equation}
and with $\B\supseteq\X$, let $\K\subset\R^n\times\R^p$ be the set
\begin{equation}
\label{setk}
\K\,:=\,\{(\x,\y)\,\in\,\R^{n+p}\::\:\x\in\B;\quad h_j(\x,\y)\geq0,\quad j=1,\ldots,m\}.
\end{equation}
Observe that problem $\P$ in (\ref{defsemi-inf}) is equivalent to:
\begin{eqnarray}
\label{equiv}
\P:\qquad f^*&=&\min_{\x\in\X}\:\{\,f(\x)\::\:\Phi(\x)\,\leq\,0\:\}\\
\label{defphi}
\mbox{where}\quad \Phi(\x)&=&\max_{\y}\,\{g(\x,\y)\::\:\y\in\Y_\x\,\},\quad\x\in\B.
\end{eqnarray}

\begin{lemma}
\label{lem1}
Let $\K\subset\R^{n+p}$ in (\ref{setk}) be compact
and assume that for every $\x\in\B\subset\R^n$, the set $\Y_\x$ defined in (\ref{setyx}) is nonempty. Then $\Phi$ is upper semicontinuous (u.s.c.) on $\B$.
Moreover, if there is some compact set $\Y\subset\R^p$ such that
$\Y_\x=\Y$ for every $\x\in\B$, then $\Phi$ is continuous on $\B$.
\end{lemma}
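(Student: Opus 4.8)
The plan is to exploit compactness of $\K$ together with continuity of the polynomials $g$ and $h_j$, running a standard subsequence argument and treating upper and lower semicontinuity separately.

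First I would check that $\Phi$ is well defined and finite on $\B$. For $\x\in\B$ the slice $\Y_\x=\{\y:(\x,\y)\in\K\}$ is a closed subset of the compact set $\K$, hence compact, and it is nonempty by hypothesis; since $g$ is continuous, the supremum in (\ref{defphi}) is attained, so $\Phi(\x)$ is finite and the ``$\max$'' in (\ref{defphi}) is justified.

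For upper semicontinuity, fix $\x^*\in\B$ and an arbitrary sequence $\x_k\to\x^*$ in $\B$. Passing to a subsequence (not relabelled) along which $\Phi(\x_k)\to\limsup_k\Phi(\x_k)$, choose for each $k$ a maximizer $\y_k\in\Y_{\x_k}$ with $g(\x_k,\y_k)=\Phi(\x_k)$. The points $(\x_k,\y_k)$ all lie in the compact set $\K$, so after a further subsequence $\y_k\to\y^*$ with $(\x^*,\y^*)\in\K$; in particular $h_j(\x^*,\y^*)\geq0$ for all $j$, i.e. $\y^*\in\Y_{\x^*}$. Continuity of $g$ then gives $\limsup_k\Phi(\x_k)=\lim_k g(\x_k,\y_k)=g(\x^*,\y^*)\leq\Phi(\x^*)$, which is exactly u.s.c. on $\B$. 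For the second assertion it remains to establish lower semicontinuity under the hypothesis $\Y_\x\equiv\Y$. Fix $\x^*\in\B$, a maximizer $\y^*\in\Y$ with $g(\x^*,\y^*)=\Phi(\x^*)$, and any $\x_k\to\x^*$. Since now $\y^*\in\Y=\Y_{\x_k}$ for every $k$, the point $\y^*$ is feasible for each inner problem, so $\Phi(\x_k)\geq g(\x_k,\y^*)$; letting $k\to\infty$ and using continuity of $g$ yields $\liminf_k\Phi(\x_k)\geq g(\x^*,\y^*)=\Phi(\x^*)$. Combined with the u.s.c. already proved, this gives continuity.

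The only delicate point is the lower bound, and it is precisely here that the variability of $\Y_\x$ matters. When $\Y_\x$ genuinely depends on $\x$, a maximizer $\y^*$ for $\x^*$ need not be feasible for nearby $\x_k$, so $\Phi$ may jump downward and l.s.c. can fail; the constant-$\Y$ assumption is exactly what guarantees persistence of feasible points, hence l.s.c. The u.s.c. part, by contrast, requires nothing beyond closedness of the graph of the correspondence $\x\mapsto\Y_\x$ (automatic since the $h_j$ are continuous) and the uniform boundedness supplied by compactness of $\K$.
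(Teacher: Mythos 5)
Your proof is correct, and the upper semicontinuity half is essentially identical to the paper's: extract a sequence realizing the $\limsup$, pick maximizers $\y_k\in\Y_{\x_k}$, use compactness of $\K$ to pass to a convergent subsequence, and conclude via continuity of $g$ and closedness of the constraint set. Where you genuinely diverge is in the continuity half. The paper proves lower semicontinuity by contradiction: it extracts a convergent subsequence of the maximizers $\y^*_{n}$, supposes $g(\x_0,\y^*)<g(\x_0,\y_0^*)-\delta$ for the limit point $\y^*$, and derives incompatible $\delta/2$ versus $\delta/3$ estimates on $\Phi(\x_{n_\ell})$. You instead observe directly that the fixed maximizer $\y^*$ at $\x^*$ remains feasible for every nearby $\x_k$ (this is exactly where $\Y_\x\equiv\Y$ enters), so $\Phi(\x_k)\geq g(\x_k,\y^*)\to g(\x^*,\y^*)=\Phi(\x^*)$, giving $\liminf_k\Phi(\x_k)\geq\Phi(\x^*)$ without any further subsequence extraction or case analysis. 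The underlying mechanism is the same --- persistence of a feasible point --- but your direct argument is shorter, avoids the contradiction scaffolding entirely, and makes transparent (as your closing remark correctly notes) that constancy of $\Y_\x$ is used only for the lower bound. This is a clean improvement in exposition over the paper's version; no gap.
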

\begin{proof}
Let $\x_0\in\B$ be fixed, arbitrary, and let $(\x_k)_{k\in\N}\subset\B$ be a sequence 
that converges to $\x_0$  and such that
\[\limsup_{\x\to\x_0}\,\Phi(\x)\,=\,\lim_{k\to\infty} \Phi(\x_k).\]
As $\K$ is compact then so is $\Y_\x$ for every $\x\in\B$.
Therefore, as $\Y_\x\neq\emptyset$ for all $\x\in\B$
and $g$ is continuous, there exists an optimal solution $\y_k^*\in\Y_{\x_k}$ for every $k$.
By compactness there exist a subsequence $(k_\ell)$ and $\y^*\in\R^p$ such that
$(\x_{k_\ell},\y^*_{k_\ell})\to (\x_0,\y^*)\in\K$, as $\ell\to\infty$. Hence
\begin{eqnarray*}
\limsup_{\x\to\x_0}\,\Phi(\x)&=&\lim_{k\to\infty} \Phi(\x_k)\\
&=&\lim_{k\to\infty} g(\x_k,\y^*_k)\,=\,\lim_{\ell\to\infty} g(\x_{k_\ell},\y^*_{k_\ell})\\
&=&g(\x_0,\y^*)\,\leq\,\Phi(\x_0),
\end{eqnarray*}
which proves that $\Phi$ is u.s.c. at $\x_0$. As $\x_0\in\B$ was arbitrary,
$\Phi$ is u.s.c. on $\B$.

Next, assume that there is some compact set $\Y\subset\R^p$ such that
$\Y_\x=\Y$ for every $\x\in\B$. Let $\x_0\in\B$ be fixed arbitrary with
$\Phi(\x_0)=g(\x_0,\y_0^*)$ for some $\y_0^*\in\Y$. Let
$(\x_n)\subset\B$, $n\in\N$,  be a sequence such that $\x_n\to\x_0$ as $n\to\infty$,
and $\Phi(\x_0)\geq\displaystyle\liminf_{\x\to\x_0}\Phi(\x)=\displaystyle\lim_{n\to\infty}\Phi(\x_n)$.
Again, let $\y^*_n\in\Y$ be such that $\Phi(\x_n)=g(\x_n,\y^*_n)$, $n\in\N$.
By compactness, consider an arbitrary converging subsequence $(n_\ell)\subset \N$, i.e., such that
$(\x_{n_\ell},\y^*_{n_\ell})\to (\x_0,\y^*)\in\K$ as $\ell\to\infty$, for some $\y^*\in\Y$. Suppose that 
$\Phi(\x_0)\,(=g(\x_0,\y^*_0))\,>\,g(\x_0,\y^*)$, say $\Phi(\x_0)>g(\x_0,\y^*)+\delta$ for some $\delta>0$.
 By continuity of $g$, $g(\x_{n_\ell},\y^*_{n_\ell})<g(\x_0,\y^*)+\delta/2$ for every $\ell>\ell_1$ (for some $\ell_1$).
But again, by continuity, $\vert g(\x_{n_\ell},\y^*_0)-g(\x_0,\y^*_0)\vert <\delta/3$ whenever $\ell>\ell_2$ (for some $\ell_2$).
And so we obtain the contradiction 
\begin{eqnarray*}
\Phi(\x_{n_\ell})&\geq & g(\x_{n_\ell},\y^*_0)\,>\,\Phi(\x_0)-\delta/3\\
\Phi(\x_{n_\ell})&=&g(\x_{n_\ell},\y^*_{n_\ell})\,<\,\Phi(\x_0)-\delta/2,\end{eqnarray*}
whenever $\ell>\max[\ell_1,\ell_2]$. Therefore, $g(\x_0,\y^*_0)=g(\x_0,\y^*)$ and so,
\[g(\x_0,\y^*_0)\,=\,\Phi(\x_0)\,=\,g(\x_0,\y^*)\,=\,\displaystyle\lim_{\ell\to\infty} \Phi(\x_{n_\ell})\,=\,
\displaystyle\liminf_{\x\to\x_0} \Phi(\x)\leq\Phi(\x_0),\]
which combined with $\Phi$ being u.s.c., yields that
$\Phi$ is continuous at $\x_0$.
\end{proof}

We next explain how to
\begin{itemize}
\item approximate the function $\x\mapsto \Phi(\x)$ on $\B$ by a polynomial, and
\item evaluate (or at least approximate) $\Phi(\x)$ for some given $\x\in\B$, to check whether $\Phi(\x)\leq0$.
\end{itemize}
Indeed, these are the two main ingredients of the algorithm that we present later.

\subsection{Certificate of $\Phi(\x)\leq0$}

For every $\x\in\X$ fixed, let
$g_\x,h^\x_j\in\R[\y]$ be the polynomials
$\y\mapsto g_\x(\y)=g(\x,\y)$ and $\y\mapsto h^\x_j(\y):=h_j(\x,\y)$, $j=1,\ldots,m$,
and consider the hierarchy of semidefinite programs:
\begin{equation}
\label{fixed-x}
\Q_\ell(\x):\quad\left\{\begin{array}{rll}
\rho_{\ell}(\x)=\displaystyle\max_\z&L_\z(g_\x)&\\
\mbox{s.t.}&\M_\ell(\z),\M_{\ell-v_j}(h^\x_j\,\z)&\succeq0,\quad j=1,\ldots,m,
\end{array}\right.
\end{equation}
where $\z=(z_\beta)$, $\beta\in\N^p_{2\ell}$, and
$v_j=\lceil({\rm deg}\,h_j^\x)/2\rceil$, $j=1,\ldots,m$. Obviously one has $\rho_\ell(\x)\geq\Phi(\x)$ for every $\ell$, and
\begin{corollary}
\label{cor1}
Let $\x\in\X$ and assume that the polynomials $(h^\x_j)\subset\R[\y]$ satisfy the Archimedean property.
Then:

{\rm (a)} As $\ell\to\infty$, $\rho_\ell(\x)\downarrow \Phi(\x)=\max \{g(\x,\y)\,:\,\y\in\Y_\x\}$.  In particular,
if $\rho_\ell(\x)\leq0$ for some $\ell$, then $\Phi(\x)\leq0$. 

{\rm (b)} Moreover, if $\z^*$ is an optimal solution of (\ref{fixed-x}) that satisfies
\[{\rm rank}\,\M_\ell(\z^*)\,=\,{\rm rank}\,\M_{\ell-v}(\z^*)\:(=:r),\]
(where $v:=\max_jv_j$), then $\rho_\ell(\x)=\Phi(\x)$ and there are $r$ global maximizers $\y(k)\in\Y_\x$, $k=1,\ldots,r$.
\end{corollary}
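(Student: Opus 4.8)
The plan is to reduce Corollary~\ref{cor1} to Theorem~\ref{generic} by recognizing that, for each fixed $\x\in\X$, the inner problem $\Phi(\x)=\max_\y\{g(\x,\y):\y\in\Y_\x\}$ is itself a polynomial optimization problem over the compact basic semi-algebraic set $\Y_\x=\{\y\in\R^p:h^\x_j(\y)\geq0,\ j=1,\ldots,m\}$, with objective $g_\x\in\R[\y]$. The semidefinite program $\Q_\ell(\x)$ in (\ref{fixed-x}) is precisely the moment relaxation (\ref{genericsdp}) applied to this inner problem, except that we maximize rather than minimize. I would therefore first translate the maximization into the minimization form of Theorem~\ref{generic} by replacing $g_\x$ with $-g_\x$; under this substitution $\rho_\ell(\x)$ becomes $-(\text{the } \rho_\ell \text{ of Theorem~\ref{generic}})$ and $\Phi(\x)$ becomes $-h^*$.

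For part (a), the weak-duality inequality $\rho_\ell(\x)\geq\Phi(\x)$ holds for every $\ell$ because any representing measure of a point $\y\in\Y_\x$ (a Dirac mass $\delta_\y$) yields a feasible sequence $\z$ for $\Q_\ell(\x)$ with $L_\z(g_\x)=g(\x,\y)$, so the relaxation dominates the true maximum; this is already noted in the excerpt just before the statement. Monotonicity $\rho_{\ell+1}(\x)\leq\rho_\ell(\x)$ follows because every feasible $\z$ for $\Q_{\ell+1}(\x)$ restricts to a feasible $\z$ for $\Q_\ell(\x)$ (the moment and localizing matrices at level $\ell$ are principal submatrices of those at level $\ell+1$). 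Given the Archimedean hypothesis on $(h^\x_j)$, Theorem~\ref{generic} furnishes convergence of the minimization relaxation to $-\Phi(\x)$; negating restores $\rho_\ell(\x)\downarrow\Phi(\x)$. The consequence ``$\rho_\ell(\x)\leq0$ for some $\ell$ implies $\Phi(\x)\leq0$'' is then immediate from $\Phi(\x)\leq\rho_\ell(\x)$.

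For part (b), the rank condition $\operatorname{rank}\M_\ell(\z^*)=\operatorname{rank}\M_{\ell-v}(\z^*)=r$ is exactly the flat-extension hypothesis (\ref{curto}) in Theorem~\ref{generic} (here written for the variables $\y$ with $v=\max_j v_j$). Invoking the second conclusion of that theorem for the minimization of $-g_\x$ over $\Y_\x$ gives $\rho_\ell(\x)=\Phi(\x)$ together with extraction of $r$ global optimizers $\y(k)\in\Y_\x$; these maximize $g_\x$ precisely because they minimize $-g_\x$, so they are global maximizers of $\Phi(\x)$ as claimed.

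**The only point requiring care** is the bookkeeping of the sign flip and the verification that $\Q_\ell(\x)$ is genuinely an instance of (\ref{genericsdp}): one must check that the degrees $v_j=\lceil(\deg h^\x_j)/2\rceil$ and the indexing set $\N^p_{2\ell}$ match the hypotheses of Theorem~\ref{generic} after relabeling $n\leadsto p$, $\x\leadsto\y$, $q_j\leadsto h^\x_j$, and $h\leadsto -g_\x$. Since $\x$ is fixed throughout, $g_\x$ and $h^\x_j$ are honest elements of $\R[\y]$ and no $\x$-dependence interferes with the argument; thus the reduction is direct and the corollary follows with no genuine obstacle beyond this translation.
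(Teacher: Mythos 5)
Your proposal is correct and follows exactly the route the paper intends: the paper's entire proof is the single remark that Corollary~\ref{cor1} is a direct consequence of Theorem~\ref{generic}, and your write-up simply makes explicit the max-to-min sign flip, the weak duality via Dirac measures, the monotonicity of the relaxations, and the matching of the rank condition with (\ref{curto}). No gaps.
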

Corollary \ref{cor1} is a direct consequence of Theorem \ref{generic}.

\subsection{Approximating the function $\Phi$}

Recall that $\B\supseteq\X$ is a {\it simple} set like e.g., a simplex, a box or an ellipsoid and
let $\mu$ be the finite Borel probability measure uniformly distributed on $\B$. 
Therefore, the vector $\gamma=(\gamma_\alpha)$, $\alpha\in\N^n$, of moments of $\mu$, i.e.,
\[\gamma_\alpha\,:=\,\int_\B\x^\alpha\,d\mu(\x),\qquad \alpha\in\N^n,\]
can be computed easily. For instance, in the sequel we assume that $\B=[-1,1]^n=\{\x:\,\theta_i(\x)\geq0,\,i=1,\ldots n\}$
with $\theta_i\in\R[\x,\y]$ being the polynomial $(\x,\y)\mapsto \theta_i(\x,\y):=1-x_i^2$, $i=1,\ldots,n$.

Observe that the function $\Phi$ is defined in (\ref{defphi}) via a {\it parametric} polynomial optimization problem
(with $\x$ being the parameter vector).  Therefore, following \cite{lass-param}, let $r_j=\lceil({\rm deg}\,h_j)/2\rceil$, $j=1,\ldots,m$, and consider the hierarchy of semidefinite relaxations indexed by $d\in\N$:
\begin{equation}
\label{primal}
\left\{\begin{array}{rll}
\rho_{d}=\displaystyle\max_\z&L_\z(g)&\\
\mbox{s.t.}&\M_d(\z),\M_{d-r_j}(h_j\,\z)&\succeq0,\quad j=1,\ldots,m\\
&\M_{d-1}(\theta_i\,\z)&\succeq0,\quad i=1,\ldots,n\\
&L_\z(\x^\alpha)&=\,\gamma_\alpha,\quad \alpha\in\N^n_{2d},
\end{array}\right.
\end{equation}
where the sequence $\z$ is now indexed in $\N^{n+p}_{2d}$, i.e.,
$\z=(z_{\alpha\beta})$, $(\alpha,\beta)\in\N^{n+p}_{2d}$.
Writing $g_0\equiv 1$, the dual of the semidefinite program (\ref{primal})  reads
\begin{equation}
\label{dual}
\left\{\begin{array}{rl}
\rho^*_{d}=\displaystyle\min_{q,\sigma_j,\theta_i}&\displaystyle\int_\B q(\x)\,d\mu(\x)\\
\mbox{s.t.}&q(\x)-g(\x,\y)=\displaystyle\sum_{j=0}^m\sigma_j(\x,\y)h_j(\x,\y)
+\displaystyle\sum_{i=1}^n\psi_i(\x,\y)\theta_i(\x,\y)
\\
&q\in\R[\x]_{2d},\:\sigma_j,\psi_i\in\Sigma[\x,\y]\\
&{\rm deg}\,\sigma_j\,h_j\,\leq 2d,\quad j=0,\ldots,m.\\
&{\rm deg}\,\psi_i\,\theta_i\,\leq 2d,\quad i=1,\ldots,n.
\end{array}\right.
\end{equation}
It turns out that any optimal solution of the semidefinite program (\ref{dual}) permits to approximate $\Phi$ 
in a strong sense.

\begin{theorem}[\cite{lass-param}]
\label{thm2}
Let $\K\subset\R^{n+p}$ in (\ref{setk}) be compact.
Assume that the polynomials $h_j,\theta_i\in\R[\x,\y]$ satisfy the Archimedean property
and assume that for every $\x\in\B$, the set $\Y_\x$ defined in (\ref{setyx}) is nonempty.
Let $\Phi_d\in\R[\x]_{2d}$ be an optimal solution of (\ref{dual}). Then :

{\rm (a)} $\Phi_d\geq\Phi$ and as $d\to\infty$,
\begin{equation}
\label{lem2-1}
\int_\B (\Phi_d(\x)-\Phi(\x))\,d\mu(\x)\,=\,
\int_\B \vert\, \Phi_d(\x)-\Phi(\x)\,\vert\,d\mu(\x)\:\to\:0,
\end{equation}
that is, $\Phi_d\to\Phi$ for the $L_1(\B,\mu)$-norm\footnote{$L_1(\B,\mu)$ is the Banach space of 
$\mu$-integrable functions on $\B$, with norm $\Vert f\Vert=\int_\B\vert f\vert d\mu$.}.

{\rm (b)} There is a subsequence $(d_\ell)$, $\ell\in\N$, such that $\Phi_{d_\ell}\to\Phi$, $\mu$-almost uniformly\footnote{If one fixes $\epsilon>0$ arbitrary then there is some $A\in\mathcal{B}(\B)$ such that
$\mu(A)<\epsilon$ and $\Phi_{d_\ell}\to\Phi$ uniformly on $\B\setminus A$, as $\ell\to\infty$.}  in $\B$, as $\ell\to\infty$.
\end{theorem}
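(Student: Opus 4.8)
The plan is to exhibit $\int_\B\Phi\,d\mu$ as the common limit of both the primal relaxation values $\rho_d$ and the dual values $\rho^*_d=\int_\B\Phi_d\,d\mu$, while controlling $\Phi_d$ pointwise from below by $\Phi$. First I would establish the pointwise domination. Take any feasible triple $(q,\sigma_j,\psi_i)$ of the dual (\ref{dual}) and adopt the convention $h_0\equiv1$; the identity $q(\x)-g(\x,\y)=\sum_{j=0}^m\sigma_j h_j+\sum_{i=1}^n\psi_i\theta_i$ with all $\sigma_j,\psi_i\in\Sigma[\x,\y]$ shows that $q-g\geq0$ on the set $\K$ of (\ref{setk}). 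Hence for every $\x\in\B$ and every $\y\in\Y_\x$ one has $q(\x)\geq g(\x,\y)$, so $q(\x)\geq\Phi(\x)$; in particular $\Phi_d\geq\Phi$ on $\B$ and $\int_\B\Phi_d\,d\mu\geq\int_\B\Phi\,d\mu$.

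Next I would identify the infinite-dimensional moment LP sitting behind the hierarchy. Consider
\[\rho\,=\,\sup_{\phi}\Big\{\int_\K g\,d\phi\::\:\phi\geq0,\ \pi_\x\phi=\mu\Big\},\]
where $\pi_\x\phi$ denotes the marginal of $\phi$ on the $\x$-coordinates. Disintegrating any feasible $\phi$ as $d\phi=d\phi_\x\,d\mu(\x)$ with $\phi_\x$ a probability measure on $\Y_\x$ gives $\int g\,d\phi=\int_\B(\int g(\x,\cdot)\,d\phi_\x)\,d\mu\leq\int_\B\Phi\,d\mu$; conversely, a measurable selection $\x\mapsto\y^*(\x)$ of a maximizer of $g(\x,\cdot)$ over $\Y_\x$ (whose existence uses the compactness of $\K$ and the upper semicontinuity of $\Phi$ from Lemma \ref{lem1}) yields the feasible measure $d\phi=\delta_{\y^*(\x)}\,d\mu(\x)$ attaining $\int_\B\Phi\,d\mu$. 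Thus $\rho=\int_\B\Phi\,d\mu$. The primal semidefinite program (\ref{primal}) is exactly the degree-$d$ moment relaxation of this LP: the variables $z_{\alpha\beta}$ are the moments of the unknown $\phi$, the conditions $\M_d(\z),\M_{d-r_j}(h_j\z),\M_{d-1}(\theta_i\z)\succeq0$ encode (via Putinar) that $\phi$ is supported on $\K$, and the linear constraints $L_\z(\x^\alpha)=\gamma_\alpha$ encode $\pi_\x\phi=\mu$. Consequently $\rho_d\geq\rho$ for all $d$, and invoking the Archimedean property together with Putinar's Positivstellensatz (as in Theorem \ref{generic}) gives the monotone convergence $\rho_d\downarrow\rho=\int_\B\Phi\,d\mu$.

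I would then close the loop using strong duality. A Slater-type interior point for (\ref{primal})---supplied, e.g., by the moments of $\mu\otimes\nu$ for a reference measure $\nu$ charging the interior of each $\Y_\x$---guarantees $\rho_d=\rho^*_d$ with the dual attained, so that $\int_\B\Phi_d\,d\mu=\rho^*_d=\rho_d$. Combining this with the two inequalities above,
\[\int_\B\Phi\,d\mu\ \leq\ \int_\B\Phi_d\,d\mu\ =\ \rho_d\ \downarrow\ \int_\B\Phi\,d\mu,\]
so $\int_\B(\Phi_d-\Phi)\,d\mu\to0$; since $\Phi_d-\Phi\geq0$ the integrand equals $\vert\Phi_d-\Phi\vert$, which is precisely the $L_1(\B,\mu)$ convergence of part (a). For part (b), $L_1$ convergence forces convergence in $\mu$-measure; passing to a subsequence converging $\mu$-a.e. and applying Egorov's theorem (legitimate since $\mu$ is finite) delivers a further subsequence $(\Phi_{d_\ell})$ converging $\mu$-almost uniformly on $\B$.

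The hard part will be the two analytic inputs behind the second paragraph: verifying that the infinite LP value is genuinely $\int_\B\Phi\,d\mu$---which hinges on a measurable selection of the inner maximizer and on the u.s.c. of $\Phi$---and proving that the moment relaxation converges to this value, which requires Putinar's representation to hold uniformly in the parameter $\x$ (exactly what the joint Archimedean hypothesis on $\K$ buys). Establishing a zero duality gap is the second delicate point, since a failure of Slater's condition would only give $\int_\B\Phi_d\,d\mu\geq\rho_d$ and break the squeeze.
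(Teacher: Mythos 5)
Your proposal is correct and follows the same route as the paper's source: the paper itself gives no proof of part (a), deferring it entirely to \cite{lass-param}, and your reconstruction (pointwise domination $q\geq\Phi$ from the certificate in (\ref{dual}), identification of the underlying moment LP with value $\int_\B\Phi\,d\mu$ via disintegration and a measurable selection, convergence of the relaxations through the Archimedean/Putinar argument, and the squeeze through strong duality) is precisely the argument of that reference. Your derivation of (b) from (a) via convergence in measure and Egorov is exactly what the paper invokes as \cite[Theorem 2.5.3]{ash}. The one caveat you rightly flag --- attainment and zero duality gap in (\ref{dual}) --- is indeed the delicate step in \cite{lass-param}, where it rests on a Slater-type condition (e.g.\ the $\Y_\x$ having nonempty interior on a set of positive $\mu$-measure) that is not literally implied by the hypotheses as restated here; since the theorem is imported wholesale from that reference, this is inherited from the citation rather than introduced by you.
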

The proof of (a) can be found in \cite{lass-param}, whereas (b) follows from (a) and 
\cite[Theorem 2.5.3]{ash}. \\

\subsection{An algorithm}

The idea behind the algorithm is to approximate $\P$ in (\ref{defsemi-inf}) with
the {\it polynomial} optimization problem:
$(\P^\epsilon_d)$:
\begin{equation}
\label{alternative}
\P^\epsilon_{d}:\quad f^\epsilon_{d}=\min_{\x\in\X}\:\{\,f(\x)\::\: \Phi_d(\x)\,\leq\,\epsilon\:\},\qquad d=1,2,\ldots
\end{equation}
with $d\in\N,\epsilon>0$ fixed, and $\Phi_d$ as in Theorem \ref{thm2},
for every $d=1,\ldots $.

Obviously, for $\epsilon=0$ one has $f^0_d\geq f^*$ for all $d$ because
by definition $\Phi_d\geq\Phi$ for every $d\in\N$. However, it may happen
that $\P^0_d$ has no solution. Next, if $\x^*$ is an optimal solution of $\P$
and $\Phi(\x^*)<0$, it may also happen that $\Phi_d(\x^*)>0$ if $d$ is not large enough. This is why one needs to relax
the constraint $\Phi\leq0$ to $\Phi_d\leq\epsilon$ for some $\epsilon >0$.
However, in view of Theorem \ref{thm2}, one expects that $f^\epsilon_d\approx f^*$ provided that
$d$ and $\epsilon$ are sufficiently large and small, respectively. And indeed:
\begin{theorem}
\label{th3}
Assume that $\X$ is the closure of an open set.
Let $\epsilon\geq0$ be fixed, arbitrary and with $f^\epsilon_d$ be as in (\ref{alternative}), 
let $\x^\epsilon_d\in\X$ be any optimal solution of (\ref{alternative}) (including the case 
where $\epsilon=0$), and let
\[\tilde{f}^\epsilon_d:=\min\{f^\epsilon_\ell:\,\ell=1,\ldots,d\}\,=\,f(\x^\epsilon_{\ell(d)})\quad\mbox{for some }\ell(d)\in\{1,\ldots,d\}.\]

{\rm (a)} If $\epsilon>0$ there exists $d_\epsilon\in\N$ such that for every $d\geq d_\epsilon$,
$f(\x^\epsilon_{\ell(d)})<f^*+\epsilon$.

{\rm (b)} If there is an optimal solution $\x^*\in\X$ of (\ref{defsemi-inf}) such that
$\Phi(\x^*)<0$, 
then there exists $d_0\in\N$ such that for every $d\geq d_0$,
$f^*\leq f(\x^0_{\ell(d)})<f^*+\epsilon$.
\end{theorem}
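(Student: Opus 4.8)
The plan is to handle (a) and (b) by a single measure-theoretic mechanism: use the $\mu$-almost uniform convergence $\Phi_{d_\ell}\to\Phi$ from Theorem~\ref{thm2}(b) to manufacture, for all large $\ell$, one point $\bar\x$ that is simultaneously feasible for the relevant relaxation and satisfies $f(\bar\x)<f^*+\epsilon$. Since $\tilde f^\epsilon_d=f(\x^\epsilon_{\ell(d)})$ is a running minimum over $\ell\le d$, it inherits this bound as soon as $d$ exceeds the corresponding index of the subsequence. Before that I would record the two easy facts. First, because $\Phi_d\ge\Phi$ for all $d$ (Theorem~\ref{thm2}(a)), every feasible point of the $\epsilon=0$ problem $\P^0_\ell$ satisfies $\Phi(\x)\le\Phi_\ell(\x)\le0$ and lies in $\X$, hence is feasible for $\P$; thus $f^0_\ell\ge f^*$ whenever $\P^0_\ell$ is feasible, which already yields the lower bound $f^*\le f(\x^0_{\ell(d)})$ in (b). Second, since $\X$ is compact and each $\Phi_d$ is a (continuous) polynomial, $\P^\epsilon_d$ attains its minimum whenever its feasible set is nonempty, so it suffices to exhibit a single feasible point of small objective value.

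For the core construction I would treat both parts at once. Let $\x^*$ be an optimal solution of $\P$, set $c:=\Phi(\x^*)$, and let $\theta$ denote the relaxation level, i.e. $\theta=\epsilon$ in (a) and $\theta=0$ in (b). In both cases $c<\theta$: in (a) one has $c\le0<\epsilon$, and in (b) the hypothesis $\Phi(\x^*)<0=\theta$ supplies exactly the strict gap. Put $\tau:=(\theta-c)/2>0$ and define the set
\[
V:=\{\,\x\in\mathrm{int}(\X)\ :\ \Phi(\x)<c+\tau,\ \ f(\x)<f^*+\epsilon\,\}.
\]
This $V$ is open in $\R^n$: the set $\{\Phi<c+\tau\}$ is open because $\Phi$ is u.s.c. (Lemma~\ref{lem1}), $\{f<f^*+\epsilon\}$ is open by continuity of $f$, and $\mathrm{int}(\X)$ is open. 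It is nonempty: since $\X$ is the closure of an open set, pick $\x_k\in\mathrm{int}(\X)$ with $\x_k\to\x^*$; u.s.c. gives $\limsup_k\Phi(\x_k)\le c<c+\tau$ and continuity gives $f(\x_k)\to f^*<f^*+\epsilon$, so $\x_k\in V$ for $k$ large. As $V$ is a nonempty open subset of $\B$ and $\mu$ is the uniform measure on $\B$, we get $\mu(V)>0$.

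I would then cash in the almost uniform convergence. Apply Theorem~\ref{thm2}(b) with $\eta:=\mu(V)/2$ to obtain a set $A$ with $\mu(A)<\mu(V)/2$ on whose complement $\Phi_{d_\ell}\to\Phi$ uniformly. Then $\mu(V\setminus A)>0$, so we may fix a point $\bar\x\in V\setminus A$. Choosing $L$ so that $\sup_{\B\setminus A}|\Phi_{d_\ell}-\Phi|<\tau$ for all $\ell\ge L$, we obtain, for $\ell\ge L$,
\[
\Phi_{d_\ell}(\bar\x)<\Phi(\bar\x)+\tau<(c+\tau)+\tau=c+2\tau=\theta,
\]
so $\bar\x$ is feasible for the relaxation at level $\theta$ (namely $\P^\epsilon_{d_\ell}$ in (a) and $\P^0_{d_\ell}$ in (b)), while $f(\bar\x)<f^*+\epsilon$. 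Hence the corresponding optimal value is $\le f(\bar\x)<f^*+\epsilon$ for every $\ell\ge L$. Finally, letting $d_L$ be the $L$-th index of the subsequence, the running minimum gives $\tilde f^\epsilon_d=f(\x^\epsilon_{\ell(d)})\le f(\bar\x)<f^*+\epsilon$ for all $d\ge d_L$; taking $d_\epsilon:=d_L$ proves (a), and taking $d_0:=d_L$, together with the lower bound noted above, proves (b).

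The main obstacle is precisely the passage from the measure-theoretic convergence of Theorem~\ref{thm2} to a genuine pointwise feasibility certificate: a priori $\Phi_d(\x^*)$ may exceed $\theta$, and the single point $\x^*$ may even sit inside the bad set $A$, so one cannot simply test feasibility at $\x^*$. The remedy is the two-step thickening used above — inflate $\x^*$ to an open set $V$ of \emph{positive} measure (possible thanks to $\X=\overline{\mathrm{int}(\X)}$ and the openness of $\{\Phi<c+\tau\}$ granted by upper semicontinuity), and then discard the small bad set $A$ while retaining positive measure, so that a usable point $\bar\x$ survives. I would expect the only delicate checks to be that $\mathrm{int}(\X)\subseteq\mathrm{int}(\B)$ (so that $\mu$ indeed charges $V$) and the bookkeeping of the constants $c,\tau,\theta$, which is what makes the strict inequality $\Phi(\x^*)<0$ in (b) do its work when $\epsilon$ serves only as an accuracy tolerance rather than as a relaxation of the constraint.
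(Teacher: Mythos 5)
Your proof is correct and follows essentially the same route as the paper's: thicken a (near-)optimal point into a nonempty open subset of ${\rm int}\,\X$ of positive $\mu$-measure (using upper semicontinuity of $\Phi$ and the assumption $\X=\overline{{\rm int}\,\X}$), then invoke the $\mu$-almost uniform convergence $\Phi_{d_\ell}\to\Phi$ to find a surviving point that is feasible for the relaxed problem and has objective value below $f^*+\epsilon$, letting the running minimum do the rest. The only (easily repaired) deviation is that in part (a) you anchor the construction at an exact optimal solution $\x^*$ of $\P$, whose existence is not guaranteed there (the paper hypothesizes attainment only in part (b)); the paper instead uses a near-optimizer $\x^*_\epsilon$ with $\Phi(\x^*_\epsilon)\le 0$ and $f(\x^*_\epsilon)<f^*+\epsilon/2$, and your argument goes through verbatim with that substitution.
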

\begin{proof}
(a) With $\epsilon>0$ fixed, arbitrary, let $\x^*_\epsilon\in \X$ be such that 
$\Phi(\x^*_\epsilon)\leq0$ and $f(\x^*_\epsilon)<f^*+\epsilon/2$.
We may assume that $\x^*_\epsilon$ is not on the boundary of $\X$.
Let $O^1_\epsilon:=\{\x\in{\rm int}\,\X\,:\,\Phi(\x)<\epsilon/2\}$ which is an open set because 
$\Phi$ is u.s.c. (by Lemma \ref{lem1}), and so $\mu(O^1_\epsilon)>0$.
Next, as $f$ is continuous, there exists $\rho_0>0$ such that
$f<f^*+\epsilon$ whenever $\x\in O^2_\epsilon:=\{\x\in{\rm int}\,\X:\Vert \x-\x^*_\epsilon\Vert<\rho_0\}$.
Observe that $\rho:=\mu(O^1_\epsilon\cap O^2_\epsilon)>0$ because 
$O^1_\epsilon\cap O^2_\epsilon$ is an open set
(with $\x^*_\epsilon\in 
O^1_\epsilon\cap O^2_\epsilon$).
Next, by Theorem \ref{thm2}(b), there is a subsequence
$(d_\ell)$, $\ell\in\N$, such that $\Phi_{d_\ell}\to\Phi$, $\mu$-almost uniformly on $\B$.
Hence, there is some Borel set $A_\epsilon\subset\B$,
and integer $\ell_\epsilon\in\N$, such that $\mu(A_\epsilon) <\rho/2$ and 
$\displaystyle\sup_{\x\in\X\setminus A_\epsilon}\vert \Phi(\x)-\Phi_{d_\ell}(\x)\vert <\epsilon/2$ for all $\ell\geq \ell_\epsilon$. In particular, as $\mu(A_\epsilon)<\rho/2<\mu(O^1_\epsilon\cap O^2_\epsilon)$, the set 
$\Delta_\epsilon:=(O^1_\epsilon\cap O^2_\epsilon)\setminus A_\epsilon$ has positive $\mu$-measure.
Therefore,  $f(\x)<f^*+\epsilon$ and $\Phi_{d_\ell}(\x)<\epsilon$ whenever $\ell\geq \ell_\epsilon$ and $\x\in \Delta_\epsilon$, which in turn implies $f^\epsilon_{d_\ell}<f^*+\epsilon$, and consequently,
$\tilde{f}^\epsilon_d=f(\x^\epsilon_{\ell(d)})<f^*+\epsilon$, the desired result. 

(b) Let $\epsilon':=-\Phi(\x^*)$, and let $O^1_{\epsilon'}:=\{\x\in{\rm int}\,\X\,:\,\Phi(\x)<-\epsilon'/2\}$ which is a nonempty open set because it contains $\x^*$ and $\Phi$ is u.s.c.. Let $O^2_{\epsilon'}$ be as $O^2_\epsilon$
in the proof of (a), but now with $\x^*_{\epsilon'}=\x^*\in\X$. Both $O^1_{\epsilon'}$ and
$O^2_{\epsilon'}$ are open and nonempty because they contain $\x^*$. The rest of the proof is like for the proof of (a), but noticing that
now for every $\x\in\Delta_{\epsilon'}$ one has $\Phi_{d_\ell}(\x)<-\epsilon'/2+\epsilon'/2=0$, and so $\x$ is feasible for (\ref{alternative}) with $\epsilon=0$. Next,
by feasiblity $f(\x)\geq f^*$ since the resulting feasible set in (\ref{alternative}) is smaller than that
of (\ref{substitute}) because $\Phi_d\geq\Phi$, for all $d$.
And so $f^*\leq f(\x)<f^*+\epsilon$ whenever $\x\in\Delta_\epsilon$, and $\ell\geq \ell_\epsilon$,
from which (b) follows.
\end{proof}
Theorem \ref{th3} provides a rationale behind the algorithm that we present below.
In solving (\ref{alternative}) with $d$ sufficiently large
and small $\epsilon$ (or even $\epsilon=0$), 
$f^\epsilon_d$ would provide a good approximation of $f^*$. 
But in principle, computing the global optimum $f^\epsilon_d$ is still a difficult problem. However,
$\P^\epsilon_d$ is a polynomial optimization problem.
Therefore, by Theorem \ref{generic}, if the polynomials $(p_s)\subset\R[\x]$ that define $\X$ satisfy the Archimedean property (see Definition \ref{ass-1})
we can approximate $f^\epsilon_d$ from below, as closely as desired,
by a monotone sequence $(f^\epsilon_{dt})$, $t\in\N$, obtained
by solving the hierarchy of semidefinite relaxations (\ref{genericsdp}), 
which here read:
\begin{equation}
\label{fixed-x-QP}
\left\{\begin{array}{rll}
f^\epsilon_{dt}=\displaystyle\min_\z&L_\z(f)&\\
\mbox{s.t.}&\M_t(\z),\M_{t-d}(\epsilon-\Phi_d\,\z)&\succeq0\\
&\M_{t-t_s}(p_s\,\z)&\succeq0,\quad s=1,\ldots,sx,
\end{array}\right.
\end{equation}
where $t_s=\lceil{(\rm deg}\,p_s)/2\rceil$, $s=1,\ldots,sx$.
\begin{corollary}
Assume that the polynomials $(p_s)\subset\R[\x]$ satisfy the Archimedean property.
Then $f^\epsilon_{dt}\uparrow f^\epsilon_d$ as $t\to\infty$.
Moreover, if $\z^*$ is an optimal solution of 
(\ref{fixed-x-QP}) and
\begin{equation}
\label{curtobis}
{\rm rank}\,\M_t(\z^*)\,=\,{\rm rank}\,\M_{t-t_0}(\z^*)\:(=:r)
\end{equation}
(where $t_0:=\max[d,\max_s[t_s]]$) 
then $f^\epsilon_{dt}=f^\epsilon_d$ 
and one may extract $r$ global minimizers 
$\x^*_d(k)\in\X$, $k=1,\ldots,r$. That is, for every $k=1,\ldots,r$,  
$f(\x^*_d(k))=f^\epsilon_d$ and $\Phi_d(\x^*_d(k))\leq\epsilon$.
\end{corollary}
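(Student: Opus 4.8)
The plan is to recognize that $\P^\epsilon_d$ in (\ref{alternative}) is itself an ordinary polynomial optimization problem of exactly the form treated in Theorem \ref{generic}, so that the statement reduces to a direct application of that theorem once its hypotheses are checked.

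First I would rewrite the feasible set of $\P^\epsilon_d$ as the basic semi-algebraic set
\[
\D_d\;:=\;\{\,\x\in\R^n : p_s(\x)\geq0,\ s=1,\ldots,sx;\ \ \epsilon-\Phi_d(\x)\geq0\,\},
\]
so that $f^\epsilon_d=\min\{f(\x):\x\in\D_d\}$. I would then identify (\ref{fixed-x-QP}) as precisely the generic hierarchy (\ref{genericsdp}) for the objective $h=f$ and the enlarged constraint family $(q_j)=(p_1,\ldots,p_{sx},\,\epsilon-\Phi_d)$: the block $\M_{t-t_s}(p_s\,\z)$ corresponds to $v_s=t_s=\lceil(\deg p_s)/2\rceil$, while $\M_{t-d}(\epsilon-\Phi_d\,\z)$ corresponds to the constraint $\epsilon-\Phi_d$, whose degree is at most $2d$ since $\Phi_d\in\R[\x]_{2d}$, so its localizing half-degree is $d$ and $\M_{t-d}$ is the canonical choice. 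Under this dictionary one has $\ell=t$ and $v=\max_jv_j=\max[d,\max_s t_s]=t_0$, so that the rank condition (\ref{curtobis}) is exactly (\ref{curto}) for this problem.

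The one hypothesis of Theorem \ref{generic} that is not immediate is the Archimedean property for the \emph{augmented} family $(p_1,\ldots,p_{sx},\,\epsilon-\Phi_d)$, rather than for $(p_s)$ alone; this is where I expect the only (very mild) subtlety to sit, and it is dispatched at once. By assumption $(p_s)$ already satisfies the Archimedean property, i.e. $M-\Vert\x\Vert^2=\sum_{s=0}^{sx}\sigma_s\,p_s$ for some $\sigma_s\in\Sigma[\x]$ (with $p_0\equiv1$); the very same identity, with the sum-of-squares multiplier attached to the new polynomial $\epsilon-\Phi_d$ taken to be $0$, is a valid Archimedean representation for the larger family. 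Hence adjoining the constraint $\Phi_d(\x)\leq\epsilon$ cannot destroy the Archimedean property, and the hypotheses of Theorem \ref{generic} hold.

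With this in place, Theorem \ref{generic} applies verbatim and yields both assertions. Its monotone-convergence part gives $f^\epsilon_{dt}\uparrow f^\epsilon_d$ as $t\to\infty$. If an optimal $\z^*$ of (\ref{fixed-x-QP}) satisfies the rank condition (\ref{curtobis}) with $t_0=\max[d,\max_s t_s]$, then the extraction step of Theorem \ref{generic} gives $f^\epsilon_{dt}=f^\epsilon_d$ and produces $r={\rm rank}\,\M_t(\z^*)$ global minimizers $\x^*_d(k)\in\D_d$. Reading off membership in $\D_d$ then furnishes exactly $\x^*_d(k)\in\X$, $\Phi_d(\x^*_d(k))\leq\epsilon$, and $f(\x^*_d(k))=f^\epsilon_d$ for each $k=1,\ldots,r$, which is the claimed conclusion.
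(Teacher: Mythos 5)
Your proof is correct and follows exactly the route the paper intends: the corollary is stated without proof as a direct application of Theorem \ref{generic} to the polynomial optimization problem with constraint family $(p_1,\ldots,p_{sx},\epsilon-\Phi_d)$, which is precisely your reduction. Your explicit verification that the Archimedean property survives adjoining $\epsilon-\Phi_d$ (by taking its multiplier to be zero) is the one detail the paper leaves implicit, and you handle it correctly.
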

However, given a minimizer $\x^*_d\in\X$, if on the one hand $\Phi_d(\x^*_d)\leq\epsilon$,
on the other hand it may not satisfy $\Phi(\x^*_d)\leq0$.
(Recall that checking whether $\Phi(\x^*_d)\leq0$ can be done via solving
the hierarchy of relaxations $\Q_\ell(\x)$ in (\ref{fixed-x}) with $\x:=\x^*_d$.)
If this happens then one solves again (\ref{fixed-x-QP}) for a smaller value of $\epsilon$, etc., until 
one obtains some $\x^*_d\in\X$ with $\Phi(\x^*_d)\leq0$.

Finally, and as already mentioned, if $d$ is relatively large,
the size of semidefinite relaxations (\ref{fixed-x-QP}) to compute $f^\epsilon_{dt}$
becomes too large for practical implementation (as one must have $t\geq d$). So in practice
one let $d$ be fixed at a small value, typically the smallest possible value of $d$, i.e., $1$ ($\Phi_d$ is quadratic)
or $2$ ($\Phi_d$ is quartic)), and one updates $\epsilon$ as indicated above. So the resulting algorithm reads:
\vspace{0.5cm}

\subsection*{Algorithm}~\\ 
\noindent
{\bf Input:} $\ell,d,k^*\in\N$, $\epsilon_0>0$ (e.g. $\epsilon_0:=10^{-1}$), $d\in\N$, 
$\tilde{\x}:=\star$, $f(\star)=+\infty$.\\
\noindent
{\bf Output:} $f(\x^*_d)$ with $\x^*_d\in\X$ and $\Phi(\x^*_d)\leq0$.\\
Step 1: Set $k=1$ and $\epsilon(k)=1$. \\
Step 2: While $k\leq k^*$, solve $\P^{\epsilon(k)}_d$ in (\ref{alternative}) $\to \x^*_k\in\X$.\\
Step 3: Solve $\Q_\ell(\x^*_k)$ in (\ref{fixed-x}) $\to \rho_\ell(\x^*_k)$.\\
If $-\epsilon_0\leq\rho_\ell(\x^*_k)\leq0$ set 
$\x^*_d:=\x^*_k$ and STOP.\\
If $\rho_\ell(\x^*_k)<-\epsilon_0$ then:
\begin{itemize}
\item if $f(\tilde{\x})>f(\x^*_k)$ then set $\tilde{\x}:=\x^*_k$. If $k=k^*$ then $\x^*_d:=\x^*_k$.
\item set $\epsilon(k+1):=2\epsilon(k)$, $k:=k+1$ and go to Step 2.
\end{itemize}
If $\rho_\ell(\x^*_k)>0$ then:
\begin{itemize}
\item If $k< k^*$ set $\epsilon(k+1):=\epsilon(k)/2$, $k:=k+1$ and go to Step 2.
\item If $k= k^*$ then set set $\x^*_d=\tilde{\x}$.
\end{itemize}
\vspace{0.2cm}

Observe that in Step 2 of the above algorithm, one assumes that 
by solving $\P^{\epsilon(k)}_d$ one obtains $\x^*_k\in\X$.

\subsection{Numerical experiments}

We have taken Examples $2, 7, 9$, K, M, N, all from 
Bhattacharjee et al. \cite[Appendix A]{bhat} and whose data are polynomials, except for problem L.
For the latter problem, the non-polynomial function $\x\mapsto \min[0,(x_1-x_2)]$ is semi-algebraic and
can be generated by introducing an additional variable $x_3$, with 
the polynomial constraints:
\[x_3^2\,=\,(x_1-x_2)^2;\qquad x_3\geq0.\]
Indeed, $2\min[0,(x_1-x_2)]=x_1-x_2-x_3$.

Although these examples are quite small, they are still non trivial (and even difficult) to solve, and we wanted to test
the above methodology with small relaxation order  $d$. In fact we have even  considered the smallest possible $d$,
i.e., $d=1$ ($\Phi_d$ is quadratic).
Results in Table 1 are quite good since by using the semidefinite relaxation of
minimal order ``$d$" one obtains an optimal value $f^*_d$ quite close to $f^*$,
at the price of updating $\epsilon$ several times.

Next, for Problem L, if we now increase $d$ to $d=2$, we improve the optimal value
which becomes $f^*_d=0.3849$ with $\epsilon=2.2$.
However, for Problem M, increasing $d$ does not improve the optimal value.
\newpage

\begin{table}
\label{tab2}
\begin{center}
\begin{tabular}{||l| c | c | c |  c ||}
\hline
&best known value& $f^*_d$ & final value of $\epsilon$\\
\hline
\hline
 problem  2&0.194& 0.198 &1.895\\
 problem 7& 1.0& 1.41& 5\\
 problem 9 & -12.0&-14.47$^*$ &0\\
 problem K& -3.0&-3.0 &3.037\\
 problem L& 0.3431&0.435 &2.295\\
 problem M& 1.0&2.25 &2.592\\
problem N& 0.0&$10^{-8}$&0\\
\hline 
\end{tabular}
\end{center}
\caption{Examples of \cite[Table 6.1]{bhat} with minimal $d$}
\end{table}

\section{Conclusion}

We have presented an algorithm for semi-infinite (global) polynomial optimization 
whose novelty with respect to previous works is to {\it not} rely on a discretization scheme.
Instead, it uses a polynomial approximation $\Phi_d$ of the function $\Phi$, obtained by 
solving some semidefinite relaxation attached to the ``joint+marginal" approach developed
in \cite{lass-param} for parametric optimization, which guarantees (strong) convergence 
$\Phi_d\to\Phi$ in $L_1$-norm. Then for fixed $d$, one has to solve a polynomial
optimization problem, which can be done by solving an appropriate hierarchy of semidefinite relaxations.
Of course, as already mentioned and especially in view of the present status of semidefinite solvers,
so far the present methodology is limited to small to medium size problems, unless sparsity in the data
and/or symmetries are taken into account appropriately, as described in e.g. \cite{lass-sparse,waki}.
Preliminary results on non trivial (but small size) examples are encouraging.

\end{document}